\def\ov#1{{\overline{#1}}}
\newcommand{\rec}{\operatorname{rec}}
\newcommand{\cco}{\operatorname{c^{\circ}}}
\newcommand{\cc}{\operatorname{c}}
\newcommand{\aff}{\operatorname{aff}}
\newcommand{\Spec}{\operatorname{Spec}}
\renewcommand{\i}{\operatorname{i}}
\newcommand{\ri}{\operatorname{ri}}
\newcommand{\trop}{\operatorname{trop}}
\newcommand{\Hom}{\operatorname{Hom}}
\newcommand{\val}{{\operatorname{val}}}
\def \G{\mathbb{G}}
\def \Q{\mathbb{Q}}
\def \R{\mathbb{R}}
\def \T{\mathbb{T}}
\def \Z{\mathbb{Z}}
\def\cV {{\mathcal V}}
\def\cX {{\mathcal X}}
\numberwithin{equation}{section}
\theoremstyle{definition}
\newtheorem{defn}[equation]{Definition}
\newtheorem{rem}[equation]{Remark}
\newtheorem{exmpl}[equation]{Example}
\theoremstyle{plain}
\newtheorem{lem}[equation]{Lemma}
\newtheorem{prop}[equation]{Proposition}
\newtheorem{thm}[equation]{Theorem}
\newtheorem{cor}[equation]{Corollary}
\newtheorem{prop-def}[equation]{Proposition-Definition}
\begin{document}

\title[Complexes and fans]{When do the recession cones of a polyhedral complex form a fan?}

\author[Jos\'e Ignacio Burgos Gil]{Jos\'e Ignacio Burgos Gil}
\address{Instituto de Ciencias Matem\'aticas (CSIC-UAM-UCM-UC3M).
 Calle Nicol\'as Cabrera 13-15,
 Cantoblanco,
 28049 Madrid, Spain}
\email{burgos@icmat.es}
\urladdr{}
\author[Mart{\'\i}n~Sombra]{Mart{\'\i}n~Sombra}
\address{ICREA \& Universitat de Barcelona, Departament d'{\`A}lgebra
  i Geometria. 
Gran Via 585, 08007 Bar\-ce\-lo\-na, Spain}
\email{sombra@ub.edu}
\urladdr{\url{http://atlas.mat.ub.es/personals/sombra}}

\thanks{Burgos Gil and Sombra were partially supported by the MICINN
  research project MTM2009-14163-C02-01. Burgos Gil was also partially
  supported by the CSIC research project 2009501001.}

\subjclass[2000]{Primary 52B99; Secondary 52B20, 14L32.}
\keywords{Polyhedral complex, fan, toric scheme, tropical variety.}

\begin{abstract}
We study the problem of when the collection of the recession cones of a
polyhedral complex forms also a complex. We exhibit an example showing
that this is no always the case. We also show that if the support of
the given polyhedral complex satisfies a Minkowski-Weyl type
condition, then the answer is positive. As a consequence, we obtain a
classification theorem for proper toric schemes over a discrete
valuation ring in terms of complete strongly convex rational
polyhedral complexes.  
\end{abstract}
\maketitle

\overfullrule=0.3mm
\vspace{-8mm}

\section{Introduction}
\label{sec:introduction}

Toric schemes over a discrete valuation ring (DVR) were introduced and
classified in \cite{Kempfals:te}.  Let $N\simeq \Z^{n}$ be a lattice
on rank $n$ and set $N_{\R}=N\otimes \R $ for the associated real vector space.
In \emph{loc. cit.}, toric schemes over a DVR are described and
classified in 
terms of rational fans in $N_{\R}\times \R_{\ge 0}$. The toric scheme
associated to a fan
is proper if and only the support of the fan is the whole
of $N_{\R}\times \R_{\ge 0}$. In this case, we say that the fan is
\emph{complete}. 
In the literature, such toric schemes are also called toric
degenerations.  

If we intersect a rational fan in $N_{\R}\times \R_{\ge 0}$ with the hyperplane
$N_{\R}\times \{1\}$ we obtain a strongly convex rational (SCR) polyhedral
complex in $N_{\R}$. Thus, given a SCR polyhedral
complex in $N_{\R}$, it is natural to ask if it defines a rational
fan in  $N_{\R}\times \R_{\ge 0}$ and hence a toric scheme. One can
also ask up to which extent these complexes classify toric schemes
over a DVR. 

In \cite[Lemma 3.2]{NishinouSiebert:tdtvtc}, T. Nishinou and
B. Siebert claim that any complete 
SCR polyhedral complex in $N_{\R}$ gives rise
to a rational fan in  $N_{\R}\times \R_{\ge 0}$, but their proof is
incomplete because they do not check that the intersection of any two
of the obtained cones is a common face. 
Moreover, in recent references, 
it is claimed without proof that any SCR  polyhedral complex (without
further hypothesis) defines a 
rational fan, see for instance \cite[\S~2.3]{SpeyerPhD}.  

This question is less innocent than it appears. Indeed, in Example
\ref{exm:17} we exhibit a SCR polyhedral complex that does not define a
rational fan. Hence it is not possible to associate a toric scheme to an
arbitrary SCR polyhedral complex. 

The main result of this note (Theorem \ref{thm:14}) is that, if we
assume that the support 
of a  complex is connected and satisfies the Minkowski-Weyl condition (see
Definition \ref{def:41} below) then its recession cones do form a
complex. In particular, 
this is true in the complete case, thus filling the gap in the
proof of the Nishinou-Siebert statement.
As a consequence, we show that proper toric schemes
over a DVR of relative dimension~$n$ are classified by complete SCR
polyhedral complexes in $N_{\R}$ (Theorem \ref{thm:7}).

The connectedness and the Minkowski-Weyl hypothesis are sufficient but
not necessary. A wide 
class of examples whose recession cones form a complex but that may
not satisfy the above hypothesis is that of extendable
complexes (Definition \ref{def:3}). A consequence of our result is
that any extendable SCR polyhedral complex
defines a toric scheme. 

The question of when an SCR polyhedral complex defines a fan is also
relevant in tropical geometry: the definition of a tropical
compactification of a subvariety of the torus depends on the
construction of a toric scheme from 
an SCR polyhedral complex supported on the associated tropical
variety. Since a tropical variety has a natural structure of SCR
polyhedral complex that is 
extendable, our theorem implies the existence
 of such a toric scheme. 


\subsection*{Acknowledgements} We thank Eric Katz and Francisco Santos 
for enlightening discussions. We also thank an anonymous referee for
useful suggestions. This research was done during a stay of the first
author at the University of Barcelona. We thank this institution for
its hospitality. 

\section{Preliminaries on polyhedral complexes} 
\label{sec:defin-prel} 
Let $N\simeq \Z^{n}$ be a lattice on rank $n$.
We write $N_{\R}=N\otimes \R $ for the associated real vector space
and let  
$M_{\R}=N_{\R}^{\vee}$ be its dual space. The pairing between $x\in
M_{\R}$ and $u\in N_{\R}$ 
 will be denoted by  
$\langle x,u\rangle$.
A \emph{polyhedron} of $N_{\R}$ is
  a convex set defined as the intersection of a finite number of
  closed halfspaces.  It is  \emph{strongly convex} if it 
contains no line. It is \emph{rational} if the closed halfspaces can be
chosen to be defined by equations with coefficients in $\Q$. 
 A \emph{polyhedral set} is a finite
union of polyhedra.
A \emph{polytope} is the convex hull of a finite set of points. A
\emph{convex polyhedral cone} is the convex conic set 
generated by a finite set of vectors of $N_{\R}$.

The following theorem is a basic tool in the study of polyhedra.

\begin{thm}[Minkowski-Weyl] \label{thm:1}
Let $E$ be a subset of $N_{\R}$.
Then $E$ is a  polyhedron if and only if there exists a polytope
$\Delta$ and a convex polyhedral cone 
$\sigma$ such that 
\begin{displaymath}
E= \Delta+\sigma.
\end{displaymath}
\end{thm}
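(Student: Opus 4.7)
The plan is to reduce to the homogeneous case of cones and then transfer the result via a homogenization trick. Concretely, I would first establish the cone version of Minkowski--Weyl: a subset $\sigma \subseteq N_{\R}$ is the intersection of finitely many linear (homogeneous) closed halfspaces if and only if it is the conic hull of finitely many vectors. The standard route is Fourier--Motzkin elimination, which projects out one coordinate at a time and allows one to translate a halfspace description into a generator description and conversely.

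For the forward direction of the theorem, given a polyhedron $E = \{u \in N_{\R} : \langle x_{i},u\rangle \le b_{i},\ i=1,\dots,r\}$, I would homogenize by considering
\[
\widetilde{E} = \bigl\{(u,t) \in N_{\R} \times \R : \langle x_{i},u\rangle \le t\, b_{i}\text{ for all } i,\ t \ge 0\bigr\},
\]
which is a polyhedral cone cut out by finitely many homogeneous halfspaces. By the cone version, $\widetilde{E}$ is finitely generated; after rescaling, the generators can be taken to be either of the form $(w_{j},1)$ for $j=1,\dots,s$ or $(v_{k},0)$ for $k=1,\dots,m$. Intersecting $\widetilde{E}$ with the hyperplane $\{t=1\}$ recovers $E$, and expressing points of this intersection as nonnegative combinations of the generators yields $E = \Conv(w_{1},\dots,w_{s}) + \Cone(v_{1},\dots,v_{m})$, as desired.

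For the converse, given a polytope $\Delta = \Conv(w_{1},\dots,w_{s})$ and a cone $\sigma = \Cone(v_{1},\dots,v_{m})$, I would form the cone
\[
\widetilde{E} = \Cone\bigl((w_{1},1),\dots,(w_{s},1),(v_{1},0),\dots,(v_{m},0)\bigr) \subseteq N_{\R} \times \R,
\]
which is finitely generated. By the cone version, it admits a description as the intersection of finitely many homogeneous halfspaces; intersecting with $\{t=1\}$ then expresses $\Delta + \sigma$ as the solution set of finitely many affine inequalities, that is, as a polyhedron.

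The main obstacle is the cone version itself, specifically the nontrivial implication that a finite intersection of halfspaces is finitely generated. A clean and self-contained way to handle this is Fourier--Motzkin elimination, which is elementary but requires careful bookkeeping of the cases where the eliminated coefficient is zero, positive, or negative; alternatively one can invoke Farkas' lemma together with Carath\'eodory's theorem. Once the cone version is in hand, the dehomogenization steps above are essentially formal.
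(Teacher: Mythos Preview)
The paper does not supply a proof of this statement: Theorem~\ref{thm:1} is quoted as the classical Minkowski--Weyl theorem and used as a black box throughout (with the attribution implicit in the name and the general reference to \cite{Roc70}). So there is no ``paper's own proof'' to compare against.

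Your proposal is correct and follows one of the standard textbook routes (homogenization plus the cone version via Fourier--Motzkin; this is essentially the argument in Ziegler's \emph{Lectures on Polytopes}). Two small points worth tightening if you write it out in full. First, in the forward direction you should note that $E\neq\emptyset$ forces at least one generator of $\widetilde E$ to have last coordinate~$1$, so that the set $\{w_j\}$ is nonempty and $\Delta=\Conv(w_1,\dots,w_s)$ is a genuine polytope; the empty polyhedron is a degenerate case to handle separately (or exclude by convention). Second, when you intersect $\widetilde E$ with $\{t=1\}$ and write a point as $\sum_j\lambda_j(w_j,1)+\sum_k\mu_k(v_k,0)$ with $\lambda_j,\mu_k\ge 0$, the constraint $t=1$ gives $\sum_j\lambda_j=1$, which is exactly what makes the $w$-part a convex combination; it is worth saying this explicitly, since it is the hinge of the dehomogenization step.
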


An immediate consequence of this result is that  
the notion of polytope coincides with that of bounded
 polyhedron and the notion of 
convex polyhedral cone coincides with that of conic 
polyhedron. 

Let $\Lambda$ be a  polyhedron. The \emph{relative interior} of
$\Lambda$, denoted $\ri(\Lambda)$, is defined as the interior of
$\Lambda$ relative to the minimal affine space that contains it. 
For $x\in M_{\R}$, we set 
\begin{displaymath}
\Lambda_{x}=\{u\in
\Lambda\mid \langle x,u\rangle\le \langle x,v\rangle,\, \forall v\in 
\Lambda\}.  
\end{displaymath}
A 
non-empty subset $F\subset
\Lambda$ is called a \emph{face} of $\Lambda$ if it is of the form
$\Lambda_{x}$ for some $x\in M_{\R}$.
The polyhedron $\Lambda$ is the disjoint union of the relative
interior of its faces.

Let $E\subset N_{\R}$ be a polyhedral subset. For each $p\in E$, the
\emph{local recession cone} of $E$ at $p$ is 
defined as
  \begin{displaymath}
    \rec_{p}(E)=\{u\in N_{\R}\mid p+\lambda u \in E,\,\forall \lambda
    \ge 0\}.
  \end{displaymath}
The \emph{recession
  cone} of $E$ is defined as 
\begin{displaymath}
\rec(E)=\bigcap_{p\in E} \rec_{p}(E).  
\end{displaymath}
Observe that both the local and the global recession cones are conic
subsets of $N_{\R}$. 
If $\Lambda $ is a  polyhedron, we have the alternative
characterization 
\begin{displaymath}
  \rec(\Lambda )=\{u\in N_{\R}\mid \Lambda +u\subset \Lambda \}.
\end{displaymath}
Hence, for  polyhedra, the above definition agrees with the 
notion of recession cone from convex analysis \cite{Roc70}.
Moreover, 
$\rec(\Lambda)$ agrees with the cone $\sigma $ in Theorem~\ref{thm:14}. In 
particular, $\sigma$ is determined by $\Lambda$ and $\rec(\Lambda )$
is a convex polyhedral cone.   
A further consequence of Minkowski-Weyl Theorem is that, for 
polyhedra, local and global 
recession cones agree. In other words,  
$\rec_{p}(\Lambda)=\rec(\Lambda)$ for any $p\in \Lambda$.
Observe  that, if $\Lambda $ is rational or strongly
convex, the same is true for $\rec(\Lambda)$. 

To a polyhedral subset $E\subset N_{\R}$ we can also associate a
(non-necessarily convex) polyhedral cone 
of $N_{\R}\times \R$, given by
\begin{displaymath}
  \cc(E) =\ov{\R_{>0}(E\times \{1\})}\subset N_{\R}\times
\R_{\ge 0}.
\end{displaymath}
It is called the \emph{cone} of $E$.
If $\Lambda $ is a  polyhedron, then $\rec(\Lambda )=\cc(\Lambda
)\cap (N_{\R}\times \{0\})$. This is not true for general polyhedral
sets. 
Again, if $\Lambda $ is rational or strongly
convex,  the same is true for $\cc(\Lambda)$.

\begin{defn}
  \label{def:1}
  A \emph{polyhedral complex} in $N_{\R}$
  is a non-empty collection~$\Pi $ of 
  polyhedra of $N_{\R}$ such that
\begin{enumerate}
\item \label{item:58} every face of an element of $\Pi $
is also in $\Pi$,
\item \label{item:59} any  two elements of $\Pi$ are either
disjoint or intersect in a common face.
\end{enumerate}   
A polyhedral complex $\Pi$ is called \emph{rational} 
(respectively \emph{strongly convex}, \emph{conic}) if all of
its elements are rational (respectively strongly convex, cones). A
strongly 
convex conic polyhedral complex is called a \emph{fan}. For shorthand,
strongly convex rational will be abbreviated to SCR.
\end{defn}

  The \emph{support} of $\Pi$ is
the polyhedral set 
\begin{displaymath}
|\Pi|=\bigcup_{\Lambda \in \Pi } \Lambda.
\end{displaymath}
For a subset $E\subset N_{\R}$,
we 
say that $\Pi $ is a polyhedral complex \emph{in} 
 $E$ whenever 
$|\Pi|\subset E$. 
We say that a complex in
 $E$ is \emph{complete} if $|\Pi|= E$.

\section{Complexes and fans}
\label{sec:complexes-fans}

There are two natural processes for linearizing a polyhedral
complex. Intuitively, the first one is to look at the
complex from far away, so that the 
unbounded polyhedra became cones. In precise terms,  
the \emph{recession} of $\Pi$ is defined as the collection of
polyhedral cones of $N_{\R}$
\begin{displaymath}
  \rec(\Pi )=\{\rec(\Lambda )\mid \Lambda\in \Pi \}. 
\end{displaymath}

The second process is analogous to the linearization of an affine
space. The \emph{cone}
of $\Pi$ is defined as the collection of cones 
in  $N_{\R}\times\R$ 
\begin{displaymath}
 \cc (\Pi)= \{\cc(\Lambda)\mid \Lambda \in
\Pi\}\cup \{\sigma\times\{0\}\mid \sigma\in \rec(\Pi)\}.
\end{displaymath}

It is a natural question to ask  whether the recession or the cone of
a given polyhedral complex is a complex too. The 
following example shows that this is not always the case.

\begin{exmpl}
  \label{exm:17}
  Let $\Pi$ be the polyhedral complex in $\R^{3}$
  consisting in the set of faces of the polyhedra 
  \begin{displaymath}
    \Lambda_{1}:=\{(x_{1},x_{2},0) |
    \,  x_{1},x_{2}\ge0\} , \quad 
    \Lambda_{2}:=\{( x_{1},x_{2},1) | \, x_{1}+x_{2},
    x_{1}-x_{2}\ge 0\}   . 
  \end{displaymath}
  We have that $\rec(\Lambda_{1})$ and $\rec(\Lambda_{2})$
  are two  cones in $\R^{2}\times \{0\}$
  whose intersection is the cone 
  $\{(x_{1},x_{2},0) |
  x_{2},x_{1}-x_{2}\ge0\} $. This cone is neither a face of  $\rec(\Lambda_{1})$
  nor of $\rec(\Lambda_{2})$. Hence $\rec(\Pi)$ is not a complex and
  consequently, neither is $\cc(\Pi)$.
  In figure~\ref{fig:example} we see the polyhedron $\Lambda _{1}$
  in light grey, the polyhedron $\Lambda _{2}$ in darker grey and
  $\rec(\Lambda _{2})$ as dashed lines. 
\end{exmpl}

  \begin{figure}[!h]
    \centering
  \input{ejemplo.pspdftex}  
    \caption{\ }  
    \label{fig:example}
  \end{figure}

This example shows that we
need to impose some condition on $\Pi$
if we want to ensure  that $\rec(\Pi )$ and $\cc(\Pi)$ are
complexes.  
The precise hypothesis in
Theorem~\ref{thm:14} was suggested to us by Francisco Santos. The key
observation is that it is enough to assume that $|\Pi|$ satisfies a
version of the Minkowski-Weyl theorem.

\begin{lem}\label{lemm:18}
  Let $E$ be a polyhedral set. The following conditions
  are equivalent.
  \begin{enumerate}
  \item \label{item:1} There is a decomposition 
  $E=\Delta + \sigma$, where $\Delta $ is a
  finite union of polytopes and $\sigma$ is a  convex polyhedral cone.
  \item \label{item:2} $\rec_{p}(E)= \rec(E)$ for all $p\in E$. 
  \end{enumerate}
\end{lem}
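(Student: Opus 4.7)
The plan is to prove the two implications in turn, the forward one being essentially the classical closedness argument and the backward one requiring a more involved structural analysis.

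For $(1)\Rightarrow(2)$, the inclusion $\rec(E)\subseteq \rec_{p}(E)$ is automatic from the definitions, so the content is in the reverse inclusion. I would first observe that the cone $\sigma$ in the decomposition is contained in $\rec(E)$: if $u\in \sigma$ and $q=d+s\in \Delta+\sigma$, then $q+\lambda u=d+(s+\lambda u)\in \Delta+\sigma=E$ because $\sigma$ is a cone. Now given $u\in \rec_{p}(E)$, I would write $p+\lambda u=d_{\lambda}+s_{\lambda}$ with $d_{\lambda}\in \Delta$ and $s_{\lambda}\in \sigma$ for every $\lambda\ge 0$. Since $\Delta$ is a finite union of polytopes it is compact, so $d_{\lambda}$ stays bounded, and dividing by $\lambda$ gives $s_{\lambda}/\lambda=u+(p-d_{\lambda})/\lambda\to u$ as $\lambda\to\infty$. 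Because $\sigma$ is a closed cone containing each $s_{\lambda}/\lambda$, the limit $u$ lies in $\sigma\subseteq \rec(E)$.

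For $(2)\Rightarrow(1)$, I would start by writing $E=\bigcup_{i=1}^{k}\Lambda_{i}$ as a finite union of polyhedra and applying Theorem~\ref{thm:1} to each factor, obtaining $\Lambda_{i}=\Delta_{i}+\sigma_{i}$ with $\Delta_{i}$ a polytope and $\sigma_{i}=\rec(\Lambda_{i})$ a convex polyhedral cone. My candidate decomposition is then $\Delta=\bigcup_{i}\Delta_{i}$ and $\sigma=\sigma_{1}+\cdots+\sigma_{k}$, the Minkowski sum, which is a convex polyhedral cone. The main task is to show $\sigma=\rec(E)$, and I would break it into three observations. First, for any $p\in \Lambda_{i}$, hypothesis (2) yields $\sigma_{i}=\rec_{p}(\Lambda_{i})\subseteq \rec_{p}(E)=\rec(E)$. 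Second, $\rec(E)$ is closed under addition: for $u_{1},u_{2}\in \rec(E)$ and any $p\in E$, the hypothesis gives $p':=p+\lambda u_{1}\in E$ for all $\lambda\ge 0$, and then $p'+\lambda u_{2}\in E$ because $u_{2}\in \rec_{p'}(E)$; iterating, $\sigma=\sigma_{1}+\cdots+\sigma_{k}\subseteq \rec(E)$. Third, for the reverse inclusion, given $u\in \rec(E)$ and any $p\in E$, the sets $A_{i}=\{\lambda\ge 0:p+\lambda u\in \Lambda_{i}\}$ are closed convex subsets of $[0,\infty)$ (pullbacks of closed convex sets under an affine map) whose union is $[0,\infty)$, so some $A_{i_{0}}$ must equal $[a,\infty)$; then $q=p+au\in \Lambda_{i_{0}}$ and $q+\lambda u\in \Lambda_{i_{0}}$ for all $\lambda\ge 0$, giving $u\in \rec(\Lambda_{i_{0}})=\sigma_{i_{0}}\subseteq \sigma$. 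Finally $E=\Delta+\sigma$ follows from the two direct inclusions $E=\bigcup_{i}(\Delta_{i}+\sigma_{i})\subseteq \Delta+\sigma$ and $\Delta+\sigma\subseteq E+\rec(E)\subseteq E$.

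I expect the main obstacle to be the second sub-step: the closure of $\rec(E)$ under addition. Without hypothesis (2), $\rec(E)$ is only a cone and need not be convex or polyhedral, as Example~\ref{exm:17} illustrates at the level of polyhedral complexes. The ray argument in the third sub-step uses only that $E$ is a finite union of polyhedra and never invokes the hypothesis; it produces the set-theoretic description $\rec(E)=\bigcup_{i}\sigma_{i}$, and it is precisely the additive closure coming from hypothesis (2) that promotes this union to the convex polyhedral cone $\sigma_{1}+\cdots+\sigma_{k}$.
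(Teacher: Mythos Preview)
Your proof is correct. The forward direction matches the paper's argument almost verbatim. For $(2)\Rightarrow(1)$, the paper is terser: it sets $\sigma=\rec(E)$ directly, observes $\sigma_i\subset\sigma$, takes $\Delta=\bigcup_i\Delta_i$, and checks $E=\Delta+\sigma$ in two lines, without explicitly verifying that $\rec(E)$ is a convex polyhedral cone. Your choice $\sigma=\sigma_1+\cdots+\sigma_k$ and the accompanying proof that this equals $\rec(E)$ make that point explicit, which is a genuine improvement in rigor over the paper's presentation.

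One correction to your closing commentary: the additive closure of $\rec(E)$ in your second sub-step does \emph{not} actually use hypothesis~(2). For $u_1,u_2\in\rec(E)$ and $p\in E$, the point $p'=p+\lambda u_1$ lies in $E$ simply because $u_1\in\rec(E)\subset\rec_p(E)$, and then $u_2\in\rec(E)\subset\rec_{p'}(E)$ gives $p'+\lambda u_2\in E$; both inclusions are automatic from the definition $\rec(E)=\bigcap_q\rec_q(E)$. So $\rec(E)$ is always a convex cone. The place where hypothesis~(2) enters is your \emph{first} sub-step, the inclusion $\sigma_i\subset\rec(E)$: without~(2) one only gets $\sigma_i\subset\rec_p(E)$ for $p\in\Lambda_i$, and Example~\ref{exm:17} shows this can fail to land in $\rec(E)$. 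Your identification of the ``main obstacle'' should therefore be moved from step two to step one.
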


\begin{proof} We first prove that \eqref{item:1} implies
  \eqref{item:2}.
  Let $E=\Delta +\sigma$ be as in \eqref{item:1}. Since $\sigma$ is a 
  convex cone, it is a semigroup. This implies
  that $\sigma\subset \rec_{p}(E)$ for all $p\in E$.
  Since  $\Delta  $ is compact and $\sigma$ is closed, if $v\in 
  \rec_{p}(E)$, 
  the fact that the ray 
  $p+\R_{\ge 0} v$ is contained in $\Delta +\sigma$ implies that $v\in
  \sigma$. Hence $\rec_{p}(E)= \sigma$ for all $p$ and so
  $\rec(E)=\sigma= \rec_{p}(E)$. 

Conversely, write $E=\bigcup_{i}\Lambda_{i}$. We set
$\sigma=\rec(E)= \rec_{p}(E)$ for any $p\in E$ and  
$\sigma_{i}= \rec(\Lambda_{i})$ for each $i$. We have
that $\sigma_{i}\subset \sigma $ for all $i$. 
 By Minkowski-Weyl Theorem, for each  $i$, there exists a polytope
 $\Delta_{i} 
 \subset \Lambda_{i}$ such that $\Lambda_{i}=
 \Delta_{i}+\sigma_{i}$. Consider the finite union of polytopes
$\Delta=\bigcup_{i} \Delta_{i}$.
It is clear that  $E\subset \Delta+\sigma$. 
Besides, $\Delta+\sigma\subset E$ since $\Delta$ is contained in $E$.
Hence $E=\Delta+\sigma$, as stated. 
\end{proof}

\begin{defn}\label{def:41} 
A polyhedral subset $E$ of $ N_{\R}$ satisfies the
  \emph{Minkowski-Weyl condition} if it verifies any of the
  equivalent conditions in Lemma~\ref{lemm:18}.
\end{defn}

\begin{thm}\label{thm:14}
Let $\Pi $ be a polyhedral complex in $N_{\R}$ such that $|\Pi |$ is
a connected polyhedral set satisfying the Minkowski-Weyl condition. Then  
\begin{enumerate}
\item \label{item:3} $\rec(\Pi )$ is a conic
  polyhedral complex in $N_{\R}$ and
  \begin{math}
    |\rec(\Pi)|=\rec(|\Pi|);
  \end{math}
\item  \label{item:4} $ \cc( \Pi)$ is a conic polyhedral
  complex in $N_{\R}\times \R_{\ge 0}$  and
  \begin{math}
 |\cc(\Pi)|=\cc(|\Pi|).   
  \end{math}
\item \label{item:5} If, in addition, $\Pi$ is rational (respectively
  strongly convex), then both $\rec(\Pi )$ and $\cc(\Pi)$ are rational
  (respectively fans).
\end{enumerate}
\end{thm}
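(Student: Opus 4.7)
The plan is to focus on part (1); parts (2) and (3) will follow with comparatively little effort. For (3), passage to the recession cone and to the cone $\cc(\cdot)$ preserves rationality and strong convexity, as already noted in Section 2, so it suffices to observe that these properties are inherited cell-by-cell. For (2), once (1) is known, one checks directly that the faces of $\cc(\Lambda)$ are exactly the $\cc(F)$ and $\rec(F)\times \{0\}$ for faces $F$ of $\Lambda$, and that an intersection $\cc(\Lambda_1)\cap \cc(\Lambda_2)$ decomposes as $\cc(\Lambda_1\cap \Lambda_2)\cup \bigl((\rec(\Lambda_1)\cap \rec(\Lambda_2))\times \{0\}\bigr)$; the common-face axioms on $\Pi$ together with part (1) then make $\cc(\Pi)$ a complex with $|\cc(\Pi)|=\cc(|\Pi|)$.

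For (1) itself, three of the four verifications are essentially routine. The inclusion $\rec(\Lambda)\subset \rec(|\Pi|)$ for each $\Lambda \in \Pi$ is immediate from Lemma \ref{lemm:18}, since Minkowski-Weyl forces $\rec_p(|\Pi|)=\rec(|\Pi|)$ at any $p\in \Lambda$. Face-closure of $\rec(\Pi)$ follows from the identity $\rec(\Lambda)_x=\rec(\Lambda_x)$ together with the fact that $\Lambda_x\in \Pi$ is a face. The reverse support inclusion $\rec(|\Pi|)\subset |\rec(\Pi)|$ is obtained by fixing any $p\in |\Pi|$, observing that $p+\R_{\ge 0}v\subset |\Pi|$, and selecting a polyhedron $\Lambda \in \Pi$ that eventually contains the tail of this ray, so that $v\in \rec(\Lambda)$.

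The real obstacle is the common-face property: for any $\Lambda_1,\Lambda_2\in \Pi$, the intersection $\tau:=\rec(\Lambda_1)\cap \rec(\Lambda_2)$ must be a face of each and must lie in $\rec(\Pi)$. Example~\ref{exm:17} shows that this is precisely what fails without the hypotheses. My strategy is a chain-connectivity lemma: for every $v\in \ri(\tau)$, one can find polyhedra $\Lambda_1=\Gamma_0,\Gamma_1,\ldots,\Gamma_k=\Lambda_2$ in $\Pi$ with $\Gamma_i\cap \Gamma_{i+1}\neq \emptyset$ (so that, by the complex axiom, they share a face) and $v\in \rec(\Gamma_i)$ for all $i$. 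To construct such a chain, pick $p_i\in \Lambda_i$, use connectedness of $|\Pi|$ to find a path $\gamma\colon [0,1]\to |\Pi|$ from $p_1$ to $p_2$, and invoke Minkowski-Weyl (which gives $|\Pi|+\rec(|\Pi|)\subset |\Pi|$) to ensure $\gamma+Tv\subset |\Pi|$ for every $T\ge 0$. I then argue that for $T$ large, every polyhedron $\Gamma$ met by $\gamma+Tv$ automatically satisfies $v\in \rec(\Gamma)$: writing $\Gamma=\Delta_\Gamma+\rec(\Gamma)$ and choosing $\gamma(s_n)+T_nv\in \Gamma$ with $T_n\to \infty$, division by $T_n$ exhibits $v$ as a limit of points of $\rec(\Gamma)$, contradicting $v\notin \rec(\Gamma)$ by closedness.

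With the chain in hand, the finish is largely formal. For adjacent $\Gamma_i,\Gamma_{i+1}$ with common face $F$, one has $\rec(F)=\rec(\Gamma_i)\cap \rec(\Gamma_{i+1})$, a face of each $\rec(\Gamma_j)$; letting $\eta_i$ denote the smallest face of $\rec(\Gamma_i)$ containing $v$, one gets $\eta_i\subset \rec(F)$, and viewed inside $\rec(F)$ the minimal face containing $v$ is canonical, so $\eta_i=\eta_{i+1}$. Induction along the chain yields $\eta_0=\eta_k$, a single cone that is simultaneously a face of $\rec(\Lambda_1)$ and of $\rec(\Lambda_2)$, contained in $\tau$, and meeting $\ri(\tau)$. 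Since $v\in \ri(\eta_0)\cap \ri(\tau)$ forces $\eta_0=\tau$, we conclude that $\tau$ is a common face lying in $\rec(\Pi)$. The main obstacle, where both the connectedness and the Minkowski-Weyl hypothesis enter essentially, is the construction of the chain of recession-compatible polyhedra.
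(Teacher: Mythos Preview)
Your proposal is correct and follows essentially the same strategy as the paper's proof: both reduce the common-face axiom to a chain-connectivity argument obtained by pushing a path in $|\Pi|$ far in the direction $v$, then propagate the minimal face containing $v$ along the chain (the paper's Lemma~\ref{lemm:16} is exactly your ``smallest face $\eta_i$ containing $v$''), and finish with Lemma~\ref{lemm:1}. The only cosmetic differences are that the paper obtains the large level $l$ via a polyhedral covering of $[0,1]\times\R_{\ge 0}$ whereas you use a limit/compactness argument on $\gamma+T v$, and the paper starts the chain at an auxiliary $\Gamma_1$ having $F_1$ as a face rather than directly at $\Lambda_1$; neither affects the substance. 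One remark: your identity $\rec(F)=\rec(\Gamma_i)\cap\rec(\Gamma_{i+1})$ is fine precisely because $\Gamma_i\cap\Gamma_{i+1}\neq\emptyset$ (pick $p\in F$ and push along any $w$ in the right-hand side), so it is not circular; and in the last line, what you actually use is $\eta_0\subset\tau$ together with $v\in\ri(\tau)\cap\eta_0$ and the fact that $\eta_0$ is a face of $\rec(\Lambda_1)$, which is exactly Lemma~\ref{lemm:1}.
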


We need some lemmas before starting the proof of this result.
\begin{lem}
  \label{lemm:12}
Let $\Lambda\subset N_{\R}$ be a  polyhedron.
Then the 
collection of the recession cones of the form
$\rec(F )$ for some face $F$ of $\Lambda$ coincides with the set of
faces of $\rec(\Lambda)$. In particular, it is a complex.
\end{lem}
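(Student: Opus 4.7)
The plan is to set up both sides of the claimed equality in terms of a single linear functional $x\in M_{\R}$ and show that the assignment $\Lambda_{x}\mapsto \rec(\Lambda_{x})$ is precisely the correspondence $\Lambda_{x}\mapsto \sigma_{x}$, where $\sigma=\rec(\Lambda)$.

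First I would apply Theorem~\ref{thm:1} to write $\Lambda=\Delta+\sigma$ with $\Delta$ a polytope and $\sigma=\rec(\Lambda)$, and record the standard fact that a face $\Lambda_{x}$ is non-empty if and only if $\langle x,\cdot\rangle$ is bounded below on $\Lambda$, which (since $\Delta$ is compact) is equivalent to $\langle x,u\rangle\ge 0$ for every $u\in\sigma$. By the same reasoning applied to $\sigma$, these are exactly the $x\in M_{\R}$ that define a non-empty face $\sigma_{x}$, and in that case $\sigma_{x}=\{u\in\sigma\mid\langle x,u\rangle=0\}$.

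Next, for such an $x$, I would compute $\rec(\Lambda_{x})$ directly. Pick any $p\in\Lambda_{x}$ and let $m=\langle x,p\rangle=\min_{v\in\Lambda}\langle x,v\rangle$. Since $\Lambda_{x}$ is itself a polyhedron, the remark after Theorem~\ref{thm:1} lets me compute $\rec(\Lambda_{x})$ as $\rec_{p}(\Lambda_{x})$. A direction $u$ lies in $\rec_{p}(\Lambda_{x})$ iff $p+\lambda u\in\Lambda$ and $\langle x,p+\lambda u\rangle=m$ for every $\lambda\ge 0$; the first condition gives $u\in\sigma$ and the second forces $\langle x,u\rangle=0$. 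Hence $\rec(\Lambda_{x})=\sigma_{x}$, which is a face of $\sigma=\rec(\Lambda)$. This proves the inclusion $\{\rec(F)\mid F\text{ face of }\Lambda\}\subset\{\text{faces of }\rec(\Lambda)\}$.

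For the reverse inclusion I would run the construction in the other direction: given a face $\tau$ of $\sigma$, choose $x\in M_{\R}$ with $\tau=\sigma_{x}$; by the characterization above, this same $x$ defines a non-empty face $F:=\Lambda_{x}$ of $\Lambda$, and the previous paragraph gives $\rec(F)=\sigma_{x}=\tau$. The only point that might look delicate is verifying that ``$\langle x,\cdot\rangle$ bounded below on $\Lambda$'' coincides with ``$x\ge 0$ on $\sigma$''; but this is immediate from the decomposition $\Lambda=\Delta+\sigma$ and the compactness of $\Delta$, so there is no real obstacle. The final sentence of the lemma, that this collection is a complex, is automatic since the set of faces of any convex polyhedral cone is known to form a fan.
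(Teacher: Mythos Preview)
Your proof is correct and follows essentially the same approach as the paper: both use the Minkowski--Weyl decomposition $\Lambda=\Delta+\sigma$ and establish $\rec(\Lambda_{x})=\sigma_{x}$. The paper reaches this identity via the one-line observation $\Lambda_{x}=\Delta_{x}+\sigma_{x}$, whereas you compute $\rec(\Lambda_{x})$ directly as a local recession cone; this is a minor expository difference rather than a genuinely different route.
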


\begin{proof}
By Minkowski-Weyl Theorem, there exist a polytope $\Delta$ and a
cone $\sigma$ such that  $\Lambda= \Delta+\sigma$. It is easy to
verify that, if $x\in M_{\R}$, then $\Lambda_{x}=
\Delta_{x}+\sigma_{x}$.
Therefore, $\rec(\Lambda_{x})=\sigma_{x}$, which
implies the result. 
\end{proof}

\begin{lem} \label{lemm:16}
  Let $\Lambda $ be a polyhedron and $v\in \rec(\Lambda )$. Then
  there is a face $F$ of $\Lambda $ such that
  $v\in \ri (\rec(F))$. Moreover, 
  if $F_{1}$ and $F_{2}$ are two faces satisfying this
  condition, then $\rec(F_{1})=\rec(F_{2})$. 
\end{lem}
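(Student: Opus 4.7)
The plan is to deduce this statement almost immediately from Lemma~\ref{lemm:12} combined with the general fact, recalled earlier in the paper, that any polyhedron is the disjoint union of the relative interiors of its faces. Note that $\rec(\Lambda)$ is itself a convex polyhedral cone, hence in particular a polyhedron, so we may apply that partition property to it.

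For the existence part, I would start with $v \in \rec(\Lambda)$ and use the partition of $\rec(\Lambda)$ into relative interiors of faces to obtain a (unique) face $\tau$ of $\rec(\Lambda)$ with $v \in \ri(\tau)$. By Lemma~\ref{lemm:12}, the faces of $\rec(\Lambda)$ are precisely the cones of the form $\rec(F)$ for $F$ a face of $\Lambda$, so $\tau = \rec(F)$ for some such $F$, as required.

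For the uniqueness claim, suppose $F_1$ and $F_2$ are two faces of $\Lambda$ with $v \in \ri(\rec(F_1)) \cap \ri(\rec(F_2))$. Again by Lemma~\ref{lemm:12}, both $\rec(F_1)$ and $\rec(F_2)$ are faces of the polyhedron $\rec(\Lambda)$. Since the relative interiors of two distinct faces of a polyhedron are disjoint, the nonempty intersection forces $\rec(F_1) = \rec(F_2)$.

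There is no real obstacle to overcome here; the only subtlety is to remember that it is the recession cone, not the face $F$ itself, that is uniquely determined by $v$. Indeed, distinct faces $F_1 \neq F_2$ of $\Lambda$ may well share the same recession cone (translates of one another in $\rec(\Lambda)$ provide typical examples), so one should not expect to recover $F$ itself, only $\rec(F)$.
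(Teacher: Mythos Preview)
Your argument is correct and is precisely the approach the paper has in mind: it simply says the lemma ``follows easily from Lemma~\ref{lemm:12},'' and what you wrote is exactly the natural unpacking of that sentence via the partition of $\rec(\Lambda)$ into relative interiors of its faces.
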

\begin{proof} This follows easily from Lemma \ref{lemm:12}.
\end{proof}

\begin{lem}[{\cite[Thm. 18.1]{Roc70}}]
\label{lemm:1}
  Let $C$ be a convex set and $F$ a face of $C$. If $D\subset C$ is
  a convex subset such that $\ri(D)\cap F\not =\emptyset$, then
  $D\subset F$.  
\end{lem}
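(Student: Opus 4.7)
The plan is to use the standard characterization of a face of a convex set: a convex subset $F$ of $C$ is a face exactly when every closed line segment in $C$ meeting $F$ in a point of its relative interior lies entirely in $F$. Accepting this as the working definition of face, the strategy is, given an arbitrary $q \in D$, to build a segment in $C$ with $q$ as one endpoint and with a point of $F$ in its relative interior; the face property then forces $q \in F$.

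Fix a witness $p \in \ri(D) \cap F$ provided by the hypothesis. For any $q \in D$, I would apply the standard line-segment extension property of relative interiors: since $p \in \ri(D)$ and $q \in D \subset \aff(D)$, the point $(1+\varepsilon)p - \varepsilon q$ lies in $\aff(D)$ for every $\varepsilon \in \R$; because a relative neighborhood of $p$ in $\aff(D)$ is contained in $D$, one can choose $\varepsilon>0$ small enough that $q' := (1+\varepsilon)p - \varepsilon q$ still lies in $D$. Convexity of $D$ is what guarantees this extension stays inside $D$. Then
\[
p \;=\; \frac{1}{1+\varepsilon}\, q' \;+\; \frac{\varepsilon}{1+\varepsilon}\, q,
\]
with both coefficients strictly positive, so $p$ lies in the relative interior of the segment $[q,q']$.

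To finish, observe that $[q,q'] \subset D \subset C$ and that this segment meets $F$ in its relative interior at $p$. The defining property of a face then forces $[q,q'] \subset F$; in particular $q \in F$. Since $q \in D$ was arbitrary, $D \subset F$.

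The only delicate step is the extension $q \mapsto q'$: it relies crucially on $p \in \ri(D)$ rather than merely $p \in D$, since otherwise the point $(1+\varepsilon)p - \varepsilon q$ might leave $D$ for every $\varepsilon > 0$. Both this hypothesis and the convexity of $D$ are built into the statement, so no genuine obstacle arises, and the lemma reduces to a direct combination of the face axiom with the line-segment property of relative interiors.
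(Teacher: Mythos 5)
Your proof is correct and is essentially the argument behind Rockafellar's Theorem~18.1, which is all the paper offers for this lemma (it gives only the citation, no proof of its own): extend the segment from $q$ through the relative interior point $p$ slightly beyond $p$ inside $D$, then invoke the extreme-set property of faces. The only caveat is that the paper defines faces via supporting functionals (sets of the form $\Lambda_{x}$), whereas you take the line-segment characterization as the definition; since any exposed face $\Lambda_{x}$ satisfies the segment property (a strict convex combination attaining the minimum of $\langle x,\cdot\rangle$ forces both endpoints to attain it), your argument covers the paper's usage without change.
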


\begin{proof}[Proof of Theorem \ref{thm:14}]
  We first prove \eqref{item:3}.
  The sets $\rec
  (\Lambda )$ for $\Lambda \in \Pi $ are polyhedra and, by Lemma
  \ref{lemm:12}, any face of an element of $\rec(\Pi )$ is in $\rec(\Pi
  )$. Thus, $\rec(\Pi )$ satisfies the first condition in Definition
  \ref{def:1} and 
 we have to show that it also satisfies the second one. That is, we
 have to show that
 given $\Lambda
  _{1},\Lambda _{2}\in \Pi $ such that $\rec(\Lambda _{1})\cap
  \rec(\Lambda _{2})$ is non-empty, this intersection is a common face of
  $\rec(\Lambda _{1})$ and $ 
  \rec(\Lambda _{2})$. 

  Let $v\in 
  \ri(\rec(\Lambda _{1})\cap \rec(\Lambda _{2}))$. Let $F_i$, $i=1,2$,
  be faces of $\Lambda _{i}$ satisfying the condition of Lemma
  \ref{lemm:16}. 
  We claim that
  we can find a 
  finite sequence of polyhedra $\Gamma _{1},\dots ,\Gamma _{n+1}$ with
  the following properties:
  \begin{enumerate}
  \item $F_{1}$ is a face of $\Gamma _{1}$ and $F_{2}$ is a face of
    $\Gamma _{n+1}$.
  \item The vector $v$ belongs to $\rec(\Gamma
    _{i}\cap \Gamma _{i+1})$ for $i=1,\dots,n$. 
  \end{enumerate}

  To prove this claim, we choose points $p_{i}\in \ri(F_{i})$, $i=1,2$. Since
  $|\Pi|$ is a connected polyhedral set, we can find a polygonal path
  $\gamma \colon [0,1] \to |\Pi |$ joining $p_{1}$ and $p_{2}$. By
  construction, $v\in \rec_{p_{1}}(|\Pi|)$. Since $|\Pi|$ satisfies the
  Minkowski-Weyl condition, $v\in \rec(|\Pi|)$. Therefore the map
  \begin{math}
    S\colon [0,1]\times \R_{\ge 0}\to |\Pi|,
    \end{math}
    given by
    \begin{math}
    S(t,r)=\gamma (t)+rv.
  \end{math}
  is well-defined. This is the key step in this proof and it fails if
  the hypothesis are not fulfilled: if $|\Pi |$ is not connected, then
  the polygonal path $\gamma $ may not exist and if $|\Pi |$ is
  connected but does not satisfy the Minkowski-Weyl condition, then the
  map~$S$ may not exist.

  Since $S$ is a piecewise affine function, we can find a finite covering
  $\mathfrak{U}$ of $[0,1]\times \R_{\ge 0}$ by  polyhedra, such
  that, for each $K\in \mathfrak{U}$, there is a $\Lambda \in \Pi$
  with $S(K)\subset \Lambda $. By the finiteness of
  $\mathfrak{U}$, we can find a number $l\ge 0$ such that the
  restriction of the covering $\mathfrak{U}$ to $[0,1]\times
  [l,\infty)$ consists of sets of the form $I_{\alpha }\times
  [l,\infty)$, where the $I_{\alpha }$ are closed intervals that cover $[0,1]$. We choose $I_{1},\dots, I_{n+1}$
  among them such that $0\in I_{0}$, $1\in I_{n+1}$ and $I_{i}\cap
  I_{i+1}\not =0$ for $i=1,\dots ,n$. For each $i$, we choose a polyhedron $\Gamma
  _{i}\in 
  \Pi$ such that $S(I_{i}\times [l,\infty))$ is contained in
  $\Gamma _{i}$.

  Since $\Gamma _{0}$ contains the point $p_{1}+lv$ and this point
  belongs to $\ri(F_1)$, then $F_{1}$ is a face of $\Gamma
  _{0}$. Analogously, $F_{2}$ is a face of $\Gamma _{n+1}$. By
  construction, it is also clear that $v\in \rec(\Gamma_{i}\cap \Gamma
  _{i+1}) $. Thus the claim is proved.

  For each $i=1,\dots ,n$ we choose a face $G_{i}$ of $\Gamma _{i}\cap \Gamma
  _{i+1}$ that satisfies the condition of Lemma
  \ref{lemm:16}. Applying Lemma \ref{lemm:16} to the polyhedra $\Gamma
  _{i}$ we obtain
  \begin{displaymath}
    \rec(F_{1})=\rec(G_{1})=\dots=\rec(G_{n})=\rec(F_{2}).
  \end{displaymath}
  By Lemma \ref{lemm:1},
  \begin{math}
    \rec(\Lambda _{1})\cap\rec(\Lambda _{2})\subset \rec(F_{1}).
  \end{math}
  Thus, we have the chain of inclusions
  \begin{displaymath}
    \rec(\Lambda _{1})\cap\rec(\Lambda _{2})\subset \rec(F_{1})\cap
    \rec(F_{2}) \subset\rec(\Lambda _{1})\cap\rec(\Lambda _{2}).
  \end{displaymath}
  Hence $\rec(\Lambda _{1})\cap\rec(\Lambda _{2})=\rec(F_{1})=\rec(F_{2})$ is a
  common face of $\rec(\Lambda _{1})$ and $\rec(\Lambda _{2})$. We
  conclude that
  $\rec(\Pi )$ is a complex.

  We now prove that $|\rec(\Pi )|=\rec(|\Pi |)$. On the one hand, for
  any $p\in |\Pi|$, we
  always have the chain of inclusions
  \begin{displaymath}
    \rec(|\Pi|)\subset \rec_{p}(|\Pi|)\subset \bigcup_{\Lambda
      \in\Pi}\rec(\Lambda )=|\rec(\Pi )|.  
  \end{displaymath}
  On the other hand, if $\Lambda \in \Pi$ and $p\in \Lambda $ we have
  \begin{math}
    \rec(\Lambda )\subset \rec_{p}(|\Pi|)= \rec(|\Pi|),
  \end{math}
  where the second equality follows from the Weyl-Minkowski
  condition. Thus
  \begin{displaymath}
    |\rec(\Pi )|=\bigcup_{\Lambda
      \in\Pi}\rec(\Lambda )\subset \rec(|\Pi|). 
  \end{displaymath}
  Hence the equality. 

  We next prove \eqref{item:4}. For a polyhedron $\Lambda $, we denote
  \begin{math}
      \cco(\Lambda ) =\R_{>0}(\Lambda \times \{1\}).
  \end{math}
  Observe that
  \begin{equation}
    \label{eq:69}
    \cc(\Lambda) =\cco(\Lambda
  )\sqcup (\rec(\Lambda )\times \{0\}).
  \end{equation}

  By Lemma \ref{lemm:12}, every face of $\rec(\Lambda )\times \{0\}$
  is of the form
  $\rec(F)\times \{0\}$ for some face $F$ of $\Lambda
  $. Moreover, a face of $\cc(\Lambda )$ is either 
  of the form $\cc(F)$ for a face $F$  of $\Lambda $ or a face of $\rec(\Lambda
  )\times \{0\}$. Hence  $\cc(\Pi)$ satisfies the first 
condition in Definition  \ref{def:1}. 

It remains to prove that any two elements of $\cc(\Pi)$ which are not
disjoint, intersect in a common face.
By \eqref{item:3}, the intersection of two cones of $\cc(
  \Pi )$ contained in $N_{\R}\times \{0\}$ is a common
  face. By \eqref{eq:69}, the same is true if we intersect a
  cone contained in $N_{\R}\times \{0\}$ with a cone of the form
  $\cc(\Lambda )$. If $\Lambda _{1},\Lambda _{2}\in \Pi $, 
  one verifies using \eqref{eq:69} that
  \begin{displaymath}
    \cc(\Lambda _{1})\cap \cc(\Lambda _{2})=
    \begin{cases}
      \cc(\Lambda _{1}\cap \Lambda _{2}), &\text{ if }\Lambda _{1}\cap
      \Lambda _{2}\not=\emptyset,\\
      (\rec(\Lambda _{1})\cap\rec(\Lambda _{2}))\times\{0\}, &\text{ otherwise.}
    \end{cases}
  \end{displaymath}
  In both cases, this is a common face of $\cc(\Lambda _{1})$ and
  $\cc(\Lambda _{2})$. Hence $\cc (\Pi )$ is a complex.

  Statement \eqref{item:5} follows easily from the previous ones.
\end{proof}  

\begin{exmpl} \label{exm:1}
  \begin{enumerate}
  \item Let $\Pi$ be the complex in $\R^{3}$ consisting in
    the set of faces of the polyhedra
  $
  \{(x_{1},x_{2},0)| x_{1},x_{2}\ge0\}$,   
  $\{(x_{1},x_{2},1)| x_{1}\ge x_{2}\ge0\}$  and  
  $\{(x_{1},x_{2},1)| x_{2}\ge x_{1}\ge0\}$.
This polyhedral
  complex satisfies the Minkowski-Weyl condition but
$|\Pi|$ is  not connected and $\rec(\Pi)$ is not a complex. Therefore, the 
  connectedness assumption  is
  necessary for the conclusion of Theorem \ref{thm:14}. 
\item \label{item:7} Let $\Pi$ be the complex in $\R^{3}$ consisting
  in the set of faces of the polyhedra
  $\{(x_{1},x_{2},0)| x_{1}\ge x_{2}\ge0\}$ and 
  $\{(x_{1},x_{2},1)| x_{2}\ge x_{1}\ge0\}$.
  This polyhedral
  complex does not satisfy the Minkowski-Weyl condition, $\rec(\Pi)$
  is a fan but
  $\rec(|\Pi|)\subsetneqq |\rec(\Pi)|$.   
\end{enumerate}
\end{exmpl}

Any  polyhedron satisfies the Minkowski-Weyl condition, therefore we have:
\begin{cor}\label{cor:8}
  Let $\Pi $ be a polyhedral complex in $N_{\R}$ such that $|\Pi
  |$ is convex. Then  $\rec(\Pi )$
  and $\cc(\Pi)$ are
  conic polyhedral complexes. If, in addition, $\Pi$ is rational
  (respectively 
  strongly convex) then $\rec(\Pi )$
  and $\cc(\Pi)$ are rational (respectively fans).
\end{cor}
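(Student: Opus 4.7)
The plan is to deduce the corollary directly from Theorem~\ref{thm:14} by verifying its two hypotheses in the convex case: connectedness of $|\Pi|$ and the Minkowski--Weyl condition.

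Connectedness is immediate, since any convex set is path-connected by straight line segments.

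For the Minkowski--Weyl condition, my preferred route is to check characterization~\eqref{item:2} of Lemma~\ref{lemm:18}. The support $|\Pi|$ is a finite union of closed polyhedra, hence closed; by hypothesis it is also convex. For a closed convex subset of $N_{\R}$, a classical result of convex analysis (see Rockafellar~\cite{Roc70}) states that the local recession cone $\rec_{p}(|\Pi|)$ does not depend on $p$ and equals $\rec(|\Pi|)$. An alternative route, matching the remark immediately preceding the corollary statement, is to first observe that a finite union of polyhedra which happens to be convex is itself a polyhedron, and then apply the Minkowski--Weyl theorem (Theorem~\ref{thm:1}) to obtain a decomposition $|\Pi| = \Delta + \sigma$ with $\Delta$ a polytope and $\sigma$ a convex polyhedral cone, thereby giving characterization~\eqref{item:1} of Lemma~\ref{lemm:18}.

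Once these two hypotheses are in place, the three conclusions of the corollary follow from the corresponding three parts of Theorem~\ref{thm:14}: parts~\eqref{item:3} and~\eqref{item:4} provide the conic polyhedral complex structures on $\rec(\Pi)$ and $\cc(\Pi)$, and part~\eqref{item:5} handles the assertions about rationality and strong convexity. I do not anticipate any genuine obstacle, since all the substantive combinatorial work has already been carried out in the proof of Theorem~\ref{thm:14}; the only mild point is to confirm that convexity really does force the Minkowski--Weyl condition, which is precisely what the two-route discussion above addresses.
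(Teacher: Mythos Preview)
Your proposal is correct and matches the paper's approach: the paper's entire justification is the one-line remark ``Any polyhedron satisfies the Minkowski--Weyl condition,'' which is exactly your alternative route (a convex polyhedral set is a polyhedron, so Theorem~\ref{thm:1} applies), with connectedness left implicit. Your Rockafellar route via characterization~\eqref{item:2} is a minor variant that also works, but the paper clearly has the polyhedron observation in mind.
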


Let $\Sigma$ be a conic polyhedral complex in
$N_{\R}\times\R_{\ge 0}$. We denote by  $\aff(\Sigma)$ the complex 
in $N_{\R}$ obtained by intersecting 
$\Sigma$ with the hyperplane $N_{\R}\times\{1\}$.  
Again, if $\Sigma  $ is rational or strongly convex, the same is true for
$\aff(\Sigma )$. 

\begin{cor} \label{cor:11} The correspondence $\Pi \mapsto \cc(\Pi )$
  is a bijection between the set of complete polyhedral complexes
  in $N_{\R}$ and the set of 
  complete conic polyhedral complexes in 
  $N_{\R}\times 
  \R_{\ge 0}$. Its
  inverse is the correspondence $\aff$. These bijections
  preserve rationality and strong convexity.
\end{cor}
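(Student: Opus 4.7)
The plan is to verify that the assignments $\Pi \mapsto \cc(\Pi )$ and $\Sigma \mapsto \aff(\Sigma )$ are well-defined on the classes indicated in the statement, are mutually inverse, and are compatible with the rationality and strong convexity conditions.

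Well-definedness of $\cc$ is immediate from Corollary~\ref{cor:8}: if $\Pi $ is complete then $|\Pi |=N_{\R}$ is convex, so $\cc(\Pi )$ is a conic polyhedral complex whose support equals $\cc(N_{\R})=N_{\R}\times \R_{\ge 0}$. For the well-definedness of $\aff$ I would check directly that, for a complete conic polyhedral complex $\Sigma $ in $N_{\R}\times \R_{\ge 0}$, the nonempty slices $\tau \cap (N_{\R}\times \{1\})$ with $\tau \in \Sigma $ form a complete polyhedral complex in $N_{\R}$; this is routine since slicing by the hyperplane $N_{\R}\times \{1\}$ commutes with taking faces and intersections among cones $\tau \not\subset N_{\R}\times \{0\}$, and $|\aff(\Sigma )|=|\Sigma |\cap (N_{\R}\times \{1\})=N_{\R}$.

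Next I would prove the two inverse identities. The composition $\aff\circ \cc$ is the identity on complete polyhedral complexes because, for each $\Lambda \in \Pi $, one has $\cc(\Lambda )\cap (N_{\R}\times \{1\})=\Lambda \times \{1\}$, whereas the auxiliary cones $\rec(\Lambda )\times \{0\}\in \cc(\Pi )$ lie in $N_{\R}\times \{0\}$ and do not contribute. For $\cc\circ \aff$, if $\tau \in \Sigma $ is not contained in $N_{\R}\times \{0\}$, setting $\Lambda :=\tau \cap (N_{\R}\times \{1\})$ and using the decomposition $\cc(\Lambda )=\cco(\Lambda )\sqcup (\rec(\Lambda )\times \{0\})$, a direct calculation (scaling $(w,1)\in \Lambda \times \{1\}$ to fill the upper part of $\tau $ and taking limits $t\to \infty $ of $(w/t+v,1/t)$ to reach the boundary face) identifies $\cco(\Lambda )$ with $\tau \cap (N_{\R}\times \R_{>0})$ and $\rec(\Lambda )\times \{0\}$ with $\tau \cap (N_{\R}\times \{0\})$, giving $\cc(\Lambda )=\tau $.

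The main obstacle is the remaining case $\tau \subset N_{\R}\times \{0\}$, where $\tau $ must be recovered as $\rec(F)\times \{0\}$ for some face $F$ of some element of $\aff(\Sigma )$. Here completeness of $\Sigma $ is essential: I would pick $p\in \ri(\tau )$ and consider the points $(p,\epsilon )$ with $\epsilon \to 0^{+}$; by finiteness of $\Sigma $, some cone $\tau '\in \Sigma $ contains a sequence $(p,\epsilon _{n})$ with $\epsilon _{n}\to 0$, hence contains $p$ itself by closedness, so in particular $\tau '\not\subset N_{\R}\times \{0\}$. The complex axioms then force $\tau $ to be a face of $\tau '$; since $\tau $ is contained in the face $\tau '\cap (N_{\R}\times \{0\})$ of $\tau '$, it is also a face of the latter. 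Setting $\Lambda ':=\tau '\cap (N_{\R}\times \{1\})\in \aff(\Sigma )$, the identification $\rec(\Lambda ')\times \{0\}=\tau '\cap (N_{\R}\times \{0\})$ from the previous case together with Lemma~\ref{lemm:12} expresses $\tau $ as $\rec(F)\times \{0\}$ for some face $F$ of $\Lambda '$, whence $\tau \in \cc(\aff(\Sigma ))$. Finally, Theorem~\ref{thm:14}\eqref{item:5} and the observation preceding the corollary yield that both $\cc$ and $\aff$ preserve rationality and strong convexity.
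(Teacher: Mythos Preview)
Your argument is correct. The verification that $\aff\circ\cc=\Id$ matches the paper's, and your treatment of the well-definedness of both maps is more explicit than the paper's (which simply declares that $\aff(\Sigma)$ is ``clearly'' a complete polyhedral complex).

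The genuine difference is in the proof that $\cc\circ\aff=\Id$. You argue cone by cone: for $\tau\not\subset N_{\R}\times\{0\}$ you compute $\cc(\tau\cap(N_{\R}\times\{1\}))=\tau$ directly, and for $\tau\subset N_{\R}\times\{0\}$ you use completeness and a limit argument to find a neighbouring cone $\tau'$ of which $\tau$ is a face, then invoke Lemma~\ref{lemm:12} to realise $\tau$ as $\rec(F)\times\{0\}$. The paper instead takes a shortcut: since $\aff(\Sigma)$ is complete, Theorem~\ref{thm:14} already guarantees that $\cc(\aff(\Sigma))$ is a complete conic complex in $N_{\R}\times\R_{\ge0}$; two complete complexes on the same support with the same top-dimensional (here $(n+1)$-dimensional) cones are equal, and the equality of top cones is immediate. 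This is shorter but relies implicitly on the fact that in a complete complex every cone is a face of a maximal one. Your approach is more self-contained and makes transparent exactly where completeness is used (recovering the boundary cones), at the price of a longer case analysis.

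One small remark: when you cite Corollary~\ref{cor:8} for the well-definedness of $\cc$, the support identity $|\cc(\Pi)|=\cc(|\Pi|)$ you use is stated in Theorem~\ref{thm:14}\eqref{item:4} rather than in Corollary~\ref{cor:8} itself, so you may want to cite the theorem directly.
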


\begin{proof} By Theorem \ref{thm:14}, if $\Pi $ is a complete
  polyhedral complex in $N_{\R}$ then $\cc(\Pi)$ is also a complete  conic
  polyhedral complex in $N_{\R}\times \R_{\ge0}$. Conversely, if $
  \Sigma $ is a complete conic polyhedral complex in
  $N_{\R}\times \R_{\ge 0}$,  it
  is clear that $\aff(\Sigma )$ is a complete
  polyhedral complex in  $N_{\R}$. 

  If $\Pi$ is a complete polyhedral complex in $N_{\R}$, then
  $\Pi=\aff(\cc(\Pi ))$. It remains to verify  that the other composition is
   the identity. Since we already know that $\cc(\aff(\Sigma ))$
  is a complex in $N_{\R}\times \R_{\ge 0}$, it is enough
  to show that $\Sigma $ and $\cc(\aff(\Sigma ))$ have the same cones
  of dimension $n+1$. But this is obvious.
Hence $\Sigma=\cc(\aff(\Sigma ))$. 

The last statement is clear.
\end{proof}

\begin{rem}
  \label{rem:9}
  \begin{enumerate}
  \item Example \ref{exm:17} shows that the map $\Pi\mapsto \cc(\Pi)$ does
    not produce a conic polyhedral complex from an arbitrary polyhedral
    complex. Thus the above corollary cannot be extended to
    arbitrary complexes.
  \item \label{item:6} Given a conic polyhedral
    complex $\Sigma $, the 
    polyhedral complex $\aff(\Sigma) $ does not need to satisfy
    the hypothesis of Theorem~\ref{thm:14}. Nevertheless, $\cc(\aff(\Sigma
    ))$ is the conic complex $\Sigma $. Therefore the hypothesis of
    Theorem~\ref{thm:14} are
    sufficient for the recession being a complex but they are not necessary.
    It would be interesting to 
    have a full characterization of the  complexes that arise as
    the image of $\aff$.
  \item If we restrict to polyhedral complexes with
    convex support, the correspondence 
    $\aff$ is not injective and 
     $\cc$ is only a right inverse of $\aff$. This is
    why in Corollary \ref{cor:11}, we restrict ourselves to complete
    complexes. 
  \end{enumerate}
\end{rem}

Let $\Pi $ be a polyhedral complex.
As we have seen in Remark \ref{rem:9}~\eqref{item:6}, the hypothesis of
Theorem \ref{thm:14} are not 
necessary for $\rec(\Pi )$ being a complex. A class of examples for
which this is true is that of extendable
complexes.

\begin{defn} \label{def:3} A polyhedral complex $\Pi $ in $\R^{n}$ is called
  \emph{extendable} if there exists a complete polyhedral complex
  $\overline \Pi$ such that $\Pi$ is a subcomplex of $\overline \Pi $.
\end{defn}

\begin{cor} \label{cor:1} 
  Let $\Pi $ be an extendable polyhedral complex. Then both $\rec(\Pi
  )$ and $\cc(\Pi )$ are complexes. 
\end{cor}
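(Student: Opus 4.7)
The plan is to reduce the statement to Theorem~\ref{thm:14} applied to an extension $\overline{\Pi}$ of $\Pi$. Since $\overline{\Pi}$ is complete, $|\overline{\Pi}| = N_{\R}$ is convex, so by Corollary~\ref{cor:8} both $\rec(\overline{\Pi})$ and $\cc(\overline{\Pi})$ are conic polyhedral complexes. The rest is bookkeeping: one checks that $\rec(\Pi)$ and $\cc(\Pi)$ are subcollections of $\rec(\overline{\Pi})$ and $\cc(\overline{\Pi})$ respectively, so condition~\eqref{item:59} of Definition~\ref{def:1} (intersections being common faces) is inherited from the ambient complex, and one only has to verify condition~\eqref{item:58} (closure under faces).

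For $\rec(\Pi)$: by definition $\rec(\Pi) = \{\rec(\Lambda) \mid \Lambda \in \Pi\} \subset \rec(\overline{\Pi})$, so intersections of two elements of $\rec(\Pi)$ are common faces since this holds inside $\rec(\overline{\Pi})$. For closure under faces, I invoke Lemma~\ref{lemm:12}: any face of $\rec(\Lambda)$ is of the form $\rec(F)$ for some face $F$ of $\Lambda$; since $\Pi$ is itself closed under faces, $F \in \Pi$ and so $\rec(F) \in \rec(\Pi)$.

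For $\cc(\Pi)$: the cones of the form $\cc(\Lambda)$ with $\Lambda \in \Pi$ clearly lie in $\cc(\overline{\Pi})$, and the cones $\sigma \times \{0\}$ with $\sigma \in \rec(\Pi)$ lie in $\cc(\overline{\Pi})$ because $\rec(\Pi) \subset \rec(\overline{\Pi})$. Hence $\cc(\Pi) \subset \cc(\overline{\Pi})$ and condition~\eqref{item:59} is again automatic. For closure under faces, I use the decomposition \eqref{eq:69}, namely $\cc(\Lambda) = \cco(\Lambda) \sqcup (\rec(\Lambda) \times \{0\})$: any face of $\cc(\Lambda)$ is either $\cc(F)$ for a face $F$ of $\Lambda$, which lies in $\cc(\Pi)$ because $F \in \Pi$, or a face of $\rec(\Lambda)\times\{0\}$, which by Lemma~\ref{lemm:12} has the form $\rec(F)\times\{0\}$ for some face $F$ of $\Lambda$ and hence lies in $\cc(\Pi)$. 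The faces of the cones $\sigma \times \{0\}$ are $\tau \times \{0\}$ for $\tau$ a face of $\sigma$, and these lie in $\cc(\Pi)$ by the already established fact that $\rec(\Pi)$ is a complex.

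There is no serious obstacle: all the real work is done in Theorem~\ref{thm:14} (applied via Corollary~\ref{cor:8}) and in Lemma~\ref{lemm:12}; the only care needed is the trivial verification that passing from $\overline{\Pi}$ to the subcollection $\Pi$ respects the face operations on recession cones and on cones $\cc(\cdot)$.
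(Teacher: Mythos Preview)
Your proof is correct and follows essentially the same approach as the paper: embed $\Pi$ in a complete complex $\overline{\Pi}$, apply Theorem~\ref{thm:14} (you go through Corollary~\ref{cor:8}, which is equivalent here since $|\overline{\Pi}|=N_{\R}$ is convex) to conclude that $\rec(\overline{\Pi})$ and $\cc(\overline{\Pi})$ are complexes, and then observe that $\rec(\Pi)$ and $\cc(\Pi)$ are face-closed subcollections. The paper's version is terser---it dispatches the $\cc(\Pi)$ case in a single sentence---whereas you spell out the face verification via Lemma~\ref{lemm:12} and~\eqref{eq:69}; but the content is the same.
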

\begin{proof}
  If $\sigma \in \rec(\Pi )$, then it is clear that all the faces of
  $\sigma $ also belong to $\rec(\Pi )$.  Let $\overline \Pi$ be a
  complete polyhedral complex which contains $\Pi $.  Let $\sigma
  ,\tau \in \rec(\Pi )$. Since both belong to $\rec(\overline \Pi )$,
  by Theorem \ref{thm:14}, its intersection is a common face. Thus
  $\rec(\Pi )$ is a complex and the same is true for $\cc(\Pi )$.
\end{proof}

In particular, the complex from Example \ref{exm:17} can not be
extended to a complete polyhedral complex. By contrast, the complex
from Example \ref{exm:1}~\eqref{item:7} is extendable. This last
example shows that an extendable complex $\Pi $ does not necessarily
verify that $|\rec(\Pi )|=\rec(|\Pi |)$.

If one is willing to admit subdivisions, the issues raised by Example
\ref{exm:17} disappear. 

\begin{prop} \label{prop:2} Let $\Pi $ be a polyhedral complex in
  $\R^{n}$. Then there exists a subdivision $\Pi'$ of $\Pi $ that is
  extendable. In particular, $\rec(\Pi ')$ and $\cc(\Pi ')$ are
  complexes.
\end{prop}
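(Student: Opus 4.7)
The plan is to subdivide $\Pi$ by the hyperplane arrangement supporting it, and to take $\Pi'$ to be the restriction of that arrangement complex to $|\Pi|$. This will give a subdivision of $\Pi$ which is automatically a subcomplex of a complete polyhedral complex on all of $\R^{n}$, hence extendable.

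More precisely, for each $\Lambda \in \Pi$ I would first fix a representation as a finite intersection of closed half-spaces, splitting any equation defining a lower-dimensional affine hull into two opposite inequalities. Let $\mathcal{H}$ be the finite collection of all the bounding hyperplanes that arise in this way as $\Lambda$ varies over $\Pi$. For each sign vector $s \in \{+, 0, -\}^{\mathcal{H}}$, let $C_{s}$ be the intersection of the corresponding closed half-spaces (using the hyperplane itself at the coordinates where $s_i = 0$). The non-empty $C_{s}$, together with all their faces, form a complete polyhedral complex $\overline{\Pi}$ on $\R^{n}$, and by construction each $\Lambda \in \Pi$ and each of its faces decomposes as a union of such cells. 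Setting $\Pi' := \{\sigma \in \overline{\Pi} : \sigma \subset |\Pi|\}$ then yields a subcomplex of $\overline{\Pi}$ which covers $|\Pi|$ and subdivides every element of $\Pi$. Since $\overline{\Pi}$ is complete, $\Pi'$ is extendable, and Corollary~\ref{cor:1} gives the second assertion.

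The main technical point is the standard verification that the sign-vector construction really produces a polyhedral complex in the sense of Definition~\ref{def:1}: every face of $C_s$ is again of the form $C_{s'}$ (obtained by promoting some coordinates of $s$ from $\pm$ to $0$), and the intersection $C_s \cap C_{s'}$ of two non-disjoint cells is another cell of the same arrangement, hence a common face of both. These are classical facts about real hyperplane arrangements; the only slight bookkeeping subtlety is to handle non-full-dimensional $\Lambda$'s uniformly, which is precisely what the pairing-of-equations device at the outset is for.
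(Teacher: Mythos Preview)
Your proof is correct and follows the same strategy as the paper's: construct a complete polyhedral complex $\overline{\Pi}$ on $\R^{n}$ that refines every $\Lambda\in\Pi$, and take $\Pi'$ to be the subcomplex of cells contained in $|\Pi|$. The paper phrases the construction of $\overline{\Pi}$ slightly more abstractly---extend each $\Lambda_i$ to some complete complex $\Pi_i$ and form the common refinement $\{\Gamma_1\cap\cdots\cap\Gamma_m:\Gamma_i\in\Pi_i\}$---while your hyperplane-arrangement complex is precisely the instance of this in which each $\Pi_i$ is chosen to be the arrangement generated by the facet-supporting hyperplanes of $\Lambda_i$.
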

\begin{proof}
  Let $\Lambda _{1},\dots,\Lambda _{m}$ be the polyhedra of $\Pi $. 
  For each $i$, the complex defined by the
  faces of $\Lambda_{i} $ is extendable. Denote by $\Pi _i $ any
  such extension. Then
  \begin{displaymath}
    \overline \Pi :=
    \{\Gamma  _{1}\cap \dots \cap \Gamma _{m}\mid \Gamma  _{i}\in \Pi
    _{i}\} 
  \end{displaymath}
  is a complete polyhedral complex that is a common subdivision of the
  $\Pi _{i}$. Let $\Pi '$ be the set of polyhedra of $\ov \Pi $ that
  are contained in $|\Pi |$. Then $\Pi '$ is an extendable subdivision
  of $\Pi $. The last statement follows from Corollary \ref{cor:1}.
\end{proof}

\section{Toric schemes over a DVR and tropical varieties}
\label{sec:toric-varieties-over}

Let $K$ be a field provided with a non-trivial discrete valuation
$\val\colon K^{\times}\twoheadrightarrow \Z$. 
Let  $K^{\circ}$ be its valuation ring and 
$S=\Spec(K^{\circ})$ its base scheme.
Let $\T_{S}\simeq \G_{m,S}^{n}$ be a
split torus over $S$ and  let $\T=\T_{S}\times \Spec(K)$ be the
corresponding split torus 
over $K$. Let $N=\Hom(\G_{m,K},\T)$ be 
the
lattice of one-parameter subgroups of $\T$. 

\begin{defn}\label{def:15}
A \emph{toric scheme over $S$ of relative dimension $n$} is a
normal integral
separated $S$-scheme of finite type $\cX$ equipped with an open
embedding $\T \hookrightarrow \cX\times \Spec(K)$ and an $S$-action of
$\T_{S}$ over $\cX$ that extends the
action of $\T$ on itself.
\end{defn}

Toric schemes over a DVR were introduced and
studied in
\cite{Kempfals:te}. 
In \emph{loc. cit.}, to each rational fan $\Sigma $ in $N_{\R}\times
\R_{\ge 0}$ 
it is associated a
toric scheme $\cX_{\Sigma }$ over $S$. Moreover, it is proved the
following classification theorem.

\begin{thm}[{\cite[\S~IV.3]{Kempfals:te}}]\label{thm:2}
  The correspondence $\Sigma \mapsto \cX_{\Sigma }$ 
is a bijection between the set of rational fans in $N_{\R}\times
\R_{\ge 0}$ and the set of 
    toric schemes over $S$ of relative dimension $n$. The scheme
    $\cX_{\Sigma }$ is proper if and only if $\Sigma$ is complete.
 \end{thm}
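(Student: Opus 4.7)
The plan is to follow the classical construction and classification of toric varieties over a field, adapted to the relative setting over the DVR $K^{\circ}$. Fix a uniformizer $\pi \in K^{\circ}$. First, for each rational cone $\sigma \subset N_{\R}\times \R_{\ge 0}$, I would construct an affine toric scheme $\cX_{\sigma}$ as follows: denoting by $M' = M\oplus \Z$ the dual of $N\oplus \Z$ and by $\sigma^{\vee}\cap M'$ the associated finitely generated saturated monoid, set
\[
\cX_{\sigma} = \Spec\bigl( K^{\circ}[\sigma^{\vee}\cap M']/(\chi^{(0,1)} - \pi) \bigr),
\]
where $\chi^{(0,1)}$ is the character dual to the vertical generator $(0,1)$ of $N\oplus \Z$. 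This is a normal, integral, flat, affine $K^{\circ}$-scheme of relative dimension $n$, carrying a canonical $\T_{S}$-action. Because $\cX_{\tau}$ is a principal open subscheme of $\cX_{\sigma}$ whenever $\tau$ is a face of $\sigma$, the pieces glue along the face relations of $\Sigma$ to a scheme $\cX_{\Sigma}$.

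Next, I would verify the axioms of Definition~\ref{def:15}. The generic fibre recovers the classical toric variety attached to the fan obtained by slicing $\Sigma$ at height zero, and this contains $\T$ as a dense open subvariety; separatedness reduces to the fact that the intersection of two cones of $\Sigma$ is a common face, so that the diagonal is closed on each overlap; and the $\T_{S}$-action assembles from the affine charts. The properness criterion follows from the valuative criterion: a morphism $\Spec(R) \to \cX_{\Sigma}$ from a valuation ring dominating $K^{\circ}$ produces, by evaluation on characters of $\T$, an element of $N_{\R}\times \R_{\ge 0}$, and this element lies in some cone of $\Sigma$ exactly when the morphism extends. Hence $\cX_{\Sigma}$ is proper iff every such point is captured, i.e. iff $|\Sigma|=N_{\R}\times \R_{\ge 0}$.

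For the reverse bijection, given a toric scheme $\cX$ over $S$, the essential task is to cover $\cX$ by $\T_{S}$-invariant affine open subschemes. This is a relative version of Sumihiro's equivariant completion theorem, which I expect to be the main obstacle; the argument should combine the normality of $\cX$ with the existence of sufficiently many $\T_{S}$-linearised line bundles separating orbit closures, taking care that the base $S$ is non-local at the generic point. Once such an invariant affine cover is at hand, each piece is identified with some $\cX_{\sigma}$ for a unique rational cone $\sigma\subset N_{\R}\times \R_{\ge 0}$, recovered from the monoid of $\T_{S}$-invariant global sections of its structure sheaf together with the condition that $\pi$ lies in the invariant ideal. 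The face relations among these cones match the inclusion relations among the affine charts, so the collection forms a rational fan in $N_{\R}\times \R_{\ge 0}$; that the two constructions are mutually inverse follows from the uniqueness of the affine toric chart attached to a cone, completing the classification.
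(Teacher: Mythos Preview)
The paper does not prove this theorem at all: it is stated as a quotation from \cite[\S~IV.3]{Kempfals:te} and used as a black box to derive Theorem~\ref{thm:7}. There is therefore no ``paper's own proof'' to compare your proposal against.

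That said, your sketch is a reasonable outline of how the result is obtained in \cite{Kempfals:te}. The construction of $\cX_{\sigma}$ via the semigroup algebra modulo $(\chi^{(0,1)}-\pi)$ and the gluing along face relations is essentially what is done there, and the valuative-criterion argument for properness is standard. The genuinely delicate point, as you correctly flag, is the inverse direction: showing that an arbitrary toric scheme over $S$ in the sense of Definition~\ref{def:15} admits a $\T_{S}$-invariant affine open cover. Your appeal to a ``relative Sumihiro theorem'' is the right idea but is not a trivial adaptation of the field case; in \cite{Kempfals:te} this is handled by working directly with the orbit decomposition over the two points of $S$ and analysing the specialisation relations, rather than via linearised line bundles. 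If you were actually writing this proof out you would need to either reproduce that argument or supply a precise reference for the relative Sumihiro statement, since the version you invoke is not in the standard literature in quite that form.
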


We are interested in the question of when an SCR polyhedral
complex in  $N_{\R}$ defines a toric scheme over $S$ and whether
this assignment allows to classify toric schemes over $S$.
Example \ref{exm:17} shows that an SCR polyhedral 
complex of $N_{\R}$ does not necessarily define a toric scheme
over $S$. Theorem \ref{thm:14} shows that to each SCR polyhedral 
complex $\Pi$ in $N_{\R}$ such that $|\Pi |$ is connected and satisfies the
Minkowski-Weyl condition we can associate a toric scheme $\cX_{\cc(\Pi )}$. Remark \ref{rem:9} shows that this assignment 
cannot give a classification in full generality. A direct
consequence of Theorem \ref{thm:2} and Corollary \ref{cor:11} is
 the following classification result. 

\begin{thm}\label{thm:7}
  The correspondence $\Pi\mapsto \cX_{\cc(\Pi)}$
  is a bijection between the set of complete SCR
  polyhedral complexes of $N_{\R}$ and the set of
  proper toric schemes over $S$ of relative dimension $n$.
 \end{thm}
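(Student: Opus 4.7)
The plan is to obtain Theorem \ref{thm:7} by composing the two bijections already at our disposal: the combinatorial bijection of Corollary \ref{cor:11} between complete polyhedral complexes in $N_{\R}$ and complete conic polyhedral complexes in $N_{\R}\times \R_{\ge 0}$, and the algebro-geometric bijection of Theorem \ref{thm:2} (due to Kempf et al.) between rational fans in $N_{\R}\times \R_{\ge 0}$ and toric schemes over $S$ of relative dimension $n$.

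First I would note that, by Corollary \ref{cor:11}, the assignment $\Pi \mapsto \cc(\Pi)$ restricts to a bijection between the collection of complete SCR polyhedral complexes in $N_{\R}$ and the collection of complete conic polyhedral complexes in $N_{\R}\times \R_{\ge 0}$ that are strongly convex and rational, i.e., the collection of complete rational fans in $N_{\R}\times \R_{\ge 0}$. This is because the corollary explicitly asserts that the bijections $\cc$ and $\aff$ preserve rationality and strong convexity, so restricting to the SCR subclass on the left yields exactly the rational fans on the right, with $\aff$ as inverse.

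Next, by Theorem \ref{thm:2}, the assignment $\Sigma \mapsto \cX_{\Sigma}$ is a bijection between all rational fans in $N_{\R}\times \R_{\ge 0}$ and all toric schemes over $S$ of relative dimension $n$, and under this bijection completeness of $\Sigma$ corresponds precisely to properness of $\cX_{\Sigma}$. Hence it restricts to a bijection between complete rational fans and proper toric schemes over $S$ of relative dimension $n$.

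Composing the two bijections yields the map $\Pi \mapsto \cX_{\cc(\Pi)}$ of the theorem, with inverse $\cX \mapsto \aff(\Sigma)$ where $\Sigma$ is the unique fan attached to $\cX$ by Theorem \ref{thm:2}. There is essentially no obstacle here; the substantive work has already been done in proving Theorem \ref{thm:14} (needed to know that $\cc(\Pi)$ really is a fan, which underlies Corollary \ref{cor:11}) and in the classification of \cite{Kempfals:te}. The only thing worth emphasizing in the write-up is that Theorem \ref{thm:14} is exactly what justifies the use of $\cc$ on arbitrary complete SCR polyhedral complexes: without it, the map $\Pi \mapsto \cX_{\cc(\Pi)}$ would not even be well-defined.
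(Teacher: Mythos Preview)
Your proposal is correct and matches the paper's approach exactly: the paper states that Theorem~\ref{thm:7} is a direct consequence of Theorem~\ref{thm:2} and Corollary~\ref{cor:11}, which is precisely the composition of bijections you describe. Your write-up simply makes explicit the restriction to the SCR/complete subclasses and the role of Theorem~\ref{thm:14}, which is entirely in line with the paper's reasoning.
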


Polyhedral complexes play also an important role in  tropical geometry.
Observe that the valuation of $K$ induces a map $ \T(\ov K)\to
N_{\R}$, which we also denote by $\val$.

\begin{defn}
  \label{def:2}
Let $V\subset \T$ be an algebraic set. The \emph{tropical variety}
associated to $V$, denoted $\trop(V)$, is the closure in $N_{\R}$ of
the subset  $\val(V(\ov K))$.
\end{defn}

Tropical varieties are polyhedral sets that  can be equipped with a standard  
structure of a polyhedral complex. Let $G_{V}$ denote the so-called
\emph{Gr\"obner complex} of $V$
\cite[\S~2]{MaclaganSturmfels:TropicalGeometry}, \cite[Defn. 5.5]{MR2503041}.
This is a complete rational polyhedral complex in $N_{\R}$. If the
stabilizer of $V$ is trivial, then $G_{V}$ is strongly convex.
The tropical variety $\trop(V)$ is the union of a finite number of
elements of $G_{V}$ and so it inherits a structure of a polyhedral
complex. We denote by $\Pi_{V}$ this extendable polyhedral complex. As a
consequence of Theorem \ref{thm:14} and Corollary \ref{cor:1} we deduce:

\begin{prop}
  \label{prop:1}
Let $V\subset \T$ be an algebraic set, then $\cc(\Pi_{V})$ is a
rational conic polyhedral complex. 
Moreover, 
let $G_{V}'$ be an SCR
subdivision of $G_{V}$ and let $\Pi'_{V}$ be the subdivision of
$\Pi_{V}$ induced by $G'_{V}$. 
Then $\cc(\Pi'_{V})$ is a rational fan. 
In particular, if the stabilizer of $V$ is trivial, then $\cc(\Pi_{V})$ is a
rational fan.
\end{prop}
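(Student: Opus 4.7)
The plan is to reduce the proposition to Corollary \ref{cor:1} by exploiting the fact that $\Pi_V$ is tautologically a subcomplex of the Gröbner complex $G_V$. I would proceed in the following steps.

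First, I would unravel the definitions. By construction, $\Pi_V$ consists of certain elements of $G_V$ (the ones contained in $\trop(V)$) together with all their faces. Since $G_V$ is a complete rational polyhedral complex in $N_\R$, this exhibits $\Pi_V$ as a subcomplex of $G_V$, so $\Pi_V$ is extendable in the sense of Definition~\ref{def:3}. Applying Corollary~\ref{cor:1} yields that $\rec(\Pi_V)$ and $\cc(\Pi_V)$ are conic polyhedral complexes. Rationality is then immediate: every $\Lambda\in\Pi_V$ is a rational polyhedron (as $G_V$ is a rational complex), hence $\rec(\Lambda)$ and $\cc(\Lambda)$ are rational cones.

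For the second assertion, the same argument applies to the refined complex. By hypothesis $G_V'$ is a complete SCR polyhedral complex refining $G_V$, and $\Pi_V'$ is by definition the subcomplex of $G_V'$ consisting of those cells contained in $|\Pi_V|$. Thus $\Pi_V'$ is again extendable, and by Corollary~\ref{cor:1} the collection $\cc(\Pi_V')$ is a conic polyhedral complex. Since every polyhedron $\Lambda\in \Pi_V'$ is strongly convex and rational, the same is true for both $\rec(\Lambda)$ and $\cc(\Lambda)$; consequently $\cc(\Pi_V')$ consists of strongly convex rational cones, that is, it is a rational fan.

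Finally, when the stabilizer of $V$ is trivial, the recalled fact that $G_V$ is itself strongly convex means $\Pi_V$ is already SCR, and we may simply take $G_V'=G_V$ and $\Pi_V'=\Pi_V$ in the previous step to conclude that $\cc(\Pi_V)$ is a rational fan. There is no serious obstacle here: the real work lies in Theorem~\ref{thm:14} and Corollary~\ref{cor:1}. The only delicate point is to make explicit that the natural polyhedral structure on $\trop(V)$ inherited from the Gröbner complex is what guarantees extendability, and that rationality and strong convexity pass from polyhedra to their recession cones and to their cones in the obvious way.
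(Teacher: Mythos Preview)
Your argument is correct and is exactly the route the paper indicates: the proposition is stated as an immediate consequence of Theorem~\ref{thm:14} and Corollary~\ref{cor:1}, with the extendability of $\Pi_V$ (and of $\Pi_V'$) coming from its being a subcomplex of the complete Gr\"obner complex $G_V$ (respectively $G_V'$). Your write-up simply makes this reasoning explicit, including the straightforward passage of rationality and strong convexity to the cones.
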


Hence, if the stabilizer of $V$ is trivial, we can associate to $V$ the
toric scheme $\cX_{\cc(\Pi_{V})}$. This kind of  schemes are
interesting 
 because, by a theorem of J. Tevelev 
 extended by D. Speyer, 
the closure $\cV$ of $V$ in  $\cX$ is proper
over $S$ \cite[Prop.~2.4.1]{SpeyerPhD},
\cite[Prop.~3.2]{HelmKatz}. The scheme $\cV$ is called a tropical
compactification of $V$.

\bibliographystyle{amsalpha}

\newcommand{\noopsort}[1]{} \newcommand{\printfirst}[2]{#1}
\newcommand{\singleletter}[1]{#1} \newcommand{\switchargs}[2]{#2#1}
\providecommand{\bysame}{\leavevmode\hbox to3em{\hrulefill}\thinspace}
\providecommand{\MR}{\relax\ifhmode\unskip\space\fi MR } 
\providecommand{\MRhref}[2]{%
\href{http://www.ams.org/mathscinet-getitem?mr=#1}{#2} }
\providecommand{\href}[2]{#2}

\end{document}